\newtheorem{theorem}{theorem}[section]
\newtheorem{thm}[theorem]{Theorem}
\newtheorem{lem}[theorem]{Lemma}
\newtheorem{prop}[theorem]{Proposition}
\newtheorem{rmk}[theorem]{Remark}
\newtheorem{nota}[theorem]{Notation}
\begin{document}

\title{\vspace{-2cm}\textbf{Regular $t$-balanced Cayley maps on split metacyclic $2$-groups}}
\author{\Large Haimiao Chen \hspace{10mm} Jingrui Zhang}

\date{}
\maketitle

\begin{abstract}
  A regular $t$-balanced Cayley map on a group $\Gamma$ is an embedding of a Cayley graph on $\Gamma$ into a surface with certain special symmetric properties. We completely classify regular $t$-balanced Cayley maps for a class of split metacyclic $2$-groups.

  \bigskip
  \noindent {\bf Keywords:} regular Cayley map; $t$-balanced; split metacyclic $2$-group; reduction. \\
  {\bf MSC2020:} 05C25, 05C10, 20B25.
\end{abstract}

\section{Introduction}

Suppose $\Gamma$ is a finite group and $\Omega$ is a generating set of $\Gamma$ such that $\omega^{-1}\in\Omega$ whenever $\omega\in\Omega$, and the identity $1\notin\Omega$. The \emph{Cayley graph} ${\rm Cay}(\Gamma,\Omega)$
is the graph having the vertex set $\Gamma$ and the arc set $\Gamma\times\Omega$, where for $\eta\in \Gamma, \omega\in\Omega$, the arc from $\eta$ to $\eta\omega$ is denoted as $(\eta,\omega)$.

A cyclic permutation $\rho$ on $\Omega$ canonically induces a permutation on the arc set via $(\eta,\omega)\mapsto(\eta,\rho(\omega))$, and this equips each vertex $\eta$ with a ``cyclic order", which means a cyclic permutation on the set of arcs emanating from $\eta$.
This determines an embedding of ${\rm Cay}(\Gamma,\Omega)$ into a closed oriented surface, which is characterized by the property that each connected component of the complement of the Cayley graph is a disk. Such an embedding is called a {\it Cayley map} and denoted by $\mathcal{CM}(\Gamma,\Omega,\rho)$.
An {\it isomorphism} of Cayley maps $\mathcal{CM}(\Gamma,\Omega,\rho)\to\mathcal{CM}(\Gamma',\Omega',\rho')$ is by definition an isomorphism
${\rm Cay}(\Gamma,\Omega)\to{\rm Cay}(\Gamma',\Omega')$ which can be extended to an orientation-preserving homeomorphism between their embedding surfaces.

A Cayley map is called {\it regular} if its automorphism group acts regularly on the arc set, i.e., for any two arcs, there exists an automorphism sending one arc to the other. It was shown in \cite{JS02} that $\mathcal{CM}(\Gamma,\Omega,\rho)$ is regular if and only if there exist a {\it skew-morphism} which is a bijective function $\varphi:\Gamma\to\Gamma$, and a {\it power function} $\pi:\Gamma\to\{1,\ldots,\#\Omega\}$ (where $\#\Omega$ is the cardinality of $\Omega$), such that $\varphi|_{\Omega}=\rho$, $\varphi(1)=1$ and $\varphi(\eta\mu)=\varphi(\eta)\varphi^{\pi(\eta)}(\mu)$ for all $\eta,\mu\in\Gamma.$

Let $d=\#\Omega$, and $t$ be an integer with $t^2\equiv 1\pmod{d}$. A regular Cayley map $\mathcal{CM}(\Gamma,\Omega,\rho)$ is called {\it $t$-balanced} if
\begin{align}
\rho(\omega^{-1})=(\rho^{t}(\omega))^{-1} \qquad \text{for\ all\ } \omega\in\Omega;  \label{eq:t-balance}
\end{align}
in particular, it is called {\it balanced} if $t\equiv 1\pmod{d}$ and {\it anti-balanced} if $t\equiv-1\pmod{d}$.
It is the residue modulo $d$ rather than $t$ itself, that plays a key role.
From now on we assume $0<t<d$, and abbreviate ``regular $t$-balanced Cayley map" to ``RBCM$_{t}$".

Recall some facts on RBCM$_t$ from \cite{Ch17} Proposition 1.2.
\begin{prop}  \label{prop:RBCMt}
{\rm(a)} A Cayley map $\mathcal{CM}(\Gamma,\Omega,\rho)$ is a RBCM$_{1}$ if and only if $\rho$ can be extended to an automorphism of $\Gamma$.

{\rm(b)} Suppose $t>1$. A Cayley map $\mathcal{CM}(\Gamma,\Omega,\rho)$ is a RBCM$_{t}$ if and only if $\rho$ can be extended to a skew-morphism of $\Gamma$, $\pi(\omega)=t$ for all $\omega\in\Omega$ and $\pi(\eta)\in\{1,t\}$ for all $\eta\in\Gamma$.

{\rm(c)} When the conditions in {\rm(b)} are satisfied,
$\Gamma_+:=\{\eta\in \Gamma\colon \pi(\eta)=1\}$ is a subgroup of index $2$,
consisting of elements which are products of an even number of generators,
$\varphi(\Gamma_+)=\Gamma_+$, and $\varphi_+:=\varphi|_{\Gamma_+}$ is an automorphism.
\end{prop}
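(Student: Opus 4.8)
The plan is to reduce everything to a single ``flip lemma'': right multiplication by a generator toggles the power function between its two admissible values $1$ and $t$. The starting point is the standard cocycle identity for the power function, obtained by expanding $\varphi(\eta\mu\nu)$ in two ways. Writing $n$ for the order of $\varphi$, associativity together with the defining relation $\varphi(\eta\mu)=\varphi(\eta)\varphi^{\pi(\eta)}(\mu)$ yields
\begin{align}
\pi(\eta\mu)\equiv\sum_{i=0}^{\pi(\eta)-1}\pi\bigl(\varphi^{i}(\mu)\bigr)\pmod{n}. \notag
\end{align}
I would also record two facts about the modulus: since $\rho=\varphi|_{\Omega}$ is a cyclic permutation of $\Omega$ of order $|\Omega|$ and $\varphi^{n}=\mathrm{id}$, we have $|\Omega|\mid n$; and since $\varphi(1)=1$, the identity with $\eta=1$ forces $\pi(1)=1$.

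Next I would prove the flip lemma: for every $\eta\in\Gamma$ and $\omega\in\Omega$, $\pi(\eta)=1\Rightarrow\pi(\eta\omega)=t$ and $\pi(\eta)=t\Rightarrow\pi(\eta\omega)=1$. Apply the identity with $\mu=\omega$. Each $\varphi^{i}(\omega)=\rho^{i}(\omega)$ again lies in $\Omega$, so every summand equals $t$ (by part (b), all generators have power $t$); hence $\pi(\eta\omega)\equiv\pi(\eta)\,t\pmod{n}$. Reducing modulo $|\Omega|$ (legitimate because $|\Omega|\mid n$) and using $t^{2}\equiv1\pmod{|\Omega|}$ gives $\pi(\eta\omega)\equiv t\pmod{|\Omega|}$ when $\pi(\eta)=1$, and $\pi(\eta\omega)\equiv1\pmod{|\Omega|}$ when $\pi(\eta)=t$. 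Since $\pi(\eta\omega)$ is by definition the representative lying in $\{1,\ldots,|\Omega|\}$, and $1$ and $t$ are distinct such representatives, the congruence pins the value down exactly. I expect this modulus bookkeeping to be the main obstacle: the cocycle identity lives modulo $n$, whereas the relation $t^{2}\equiv1$ is only available modulo $|\Omega|$, so the argument hinges on the divisibility $|\Omega|\mid n$ and on the uniqueness of residue representatives in the prescribed range.

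The remaining assertions then follow formally. Starting from $\pi(1)=1$ and applying the flip lemma one generator at a time shows that for any word $\eta=\omega_{1}\cdots\omega_{\ell}$ one has $\pi(\eta)=1$ if $\ell$ is even and $\pi(\eta)=t$ if $\ell$ is odd; as $1\neq t$, this simultaneously proves that the parity of $\ell$ is independent of the chosen word and that $\ker\pi$ is exactly the set of even-length products. Consequently $\ker\pi$ is closed under products and inverses and contains $1$, so it is a subgroup; since each generator lies outside it while $\omega^{2}\in\ker\pi$, the quotient is cyclic of order $2$ and $\ker\pi$ has index $2$. To see $\varphi(\ker\pi)=\ker\pi$, let $\theta:\Gamma\to\{\pm1\}$ be the parity homomorphism just constructed and prove $\theta(\varphi(\eta))=\theta(\eta)$ by induction on word length: for $\eta\omega$ one has $\varphi(\eta\omega)=\varphi(\eta)\,\rho^{\pi(\eta)}(\omega)$ with $\rho^{\pi(\eta)}(\omega)\in\Omega$, so $\theta(\varphi(\eta\omega))=-\theta(\varphi(\eta))=-\theta(\eta)=\theta(\eta\omega)$. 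Thus $\varphi$ preserves parity, whence $\varphi(\ker\pi)\subseteq\ker\pi$, and equality follows from injectivity and finiteness. Finally, for $\eta,\mu\in\ker\pi$ the defining relation collapses to $\varphi(\eta\mu)=\varphi(\eta)\varphi(\mu)$ because $\pi(\eta)=1$; hence $\varphi|_{\ker\pi}$ is a homomorphism, and being a bijection of $\ker\pi$ onto itself it is an automorphism.
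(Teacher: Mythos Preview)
The paper does not prove this proposition at all; it is quoted verbatim from \cite{Ch17} (Proposition~1.2 there) and used as a black box. So there is no ``paper's proof'' to compare against.

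That said, your write-up is only a proof of part~(c) (and, as a by-product, of the second clause of~(b), that $\pi$ takes values in $\{1,t\}$). You explicitly invoke part~(b) to justify that every generator has power $t$; you never \emph{derive} that fact from the $t$-balanced hypothesis, and you never touch part~(a) or the ``if'' direction of~(b). The missing link for~(b) is short but essential: from $\varphi(\omega\cdot\omega^{-1})=1$ one gets $\rho^{\pi(\omega)}(\omega^{-1})=\rho(\omega)^{-1}$, while the $t$-balanced condition $\rho(\omega^{-1})=(\rho^{t}(\omega))^{-1}$ rewrites (via the inverse-indexing $\iota$) as $\iota(i{+}1)=\iota(i)+t$; comparing with $\iota(i{+}1)=\iota(i)+\pi(\omega_i)$ forces $\pi(\omega_i)\equiv t\pmod{|\Omega|}$, hence $\pi(\omega_i)=t$. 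Without this step your flip lemma has no input, and the converse direction of~(b) (that these power-function conditions imply regularity and $t$-balance) is also left unargued.

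One minor point on bookkeeping: for a regular Cayley map the skew-morphism $\varphi$ has order exactly $|\Omega|$ (it generates the vertex stabiliser in the arc-regular automorphism group), so your careful distinction between $n=|\varphi|$ and $|\Omega|$ is unnecessary here; the cocycle identity already lives modulo $|\Omega|$ and the reduction step is vacuous. With that simplification and the missing $\pi(\omega)=t$ derivation supplied, your argument for~(c) is clean and correct.
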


By (\ref{eq:t-balance}), there is an involution $\iota$ on $\{1,\ldots,d\}$ with $\omega_i^{-1}=\omega_{\iota(i)}$ and $\iota(i+1)\equiv\iota(i)+t\pmod{d}$ for all $i$.
Let $\ell=\iota(d)$, then $\iota(i)\equiv\ell+ti\pmod{d}$, and the condition $\iota^2={\rm id}$ is equivalent to $(t+1)\ell\equiv 0\pmod{d}$, which together with $t^2\equiv 1\pmod{d}$ implies $(t-1,d)\mid 2\ell$.
We say that the RBCM$_t$ has {\it type I} or {\it type II} if $(t-1,d)\nmid\ell$ or $(t-1,d)\mid\ell$, respectively.

\begin{rmk} \label{rmk:ell-iso}
\rm Observe that $(t-1,d)\mid\ell$ if and only if $\Omega$ contains an element of order 2, so RBCM$_t$'s of different type cannot be isomorphic. On the other hand, according to Lemma 2.4 of \cite{KKF06}, two RBCM$_t$'s of the same type $\mathcal{CM}(\Gamma_j,\Omega_j,\rho_j), j=1,2$ are isomorphic if and only if there exists an isomorphism $\sigma:\Gamma_1\to\Gamma_2$ such that $\sigma(\Omega_1)=\Omega_2$ and $\sigma\circ\rho_1=\rho_2\circ\sigma$.

When $\mathcal{CM}(\Gamma,\{\omega_1,\ldots,\omega_d\},\rho)$ has type I (resp. type II), by re-indexing the $\omega_i$'s if necessary, we may assume $\ell=(t-1,d)/2$ (resp. $\ell=(t-1,d)$).
\end{rmk}

So far, people have completely classified RBCM$_{t}$'s for the following classes of groups: dihedral groups (Kwak, Kwon and Feng \cite{KKF06}, 2006), dicyclic groups (Kwak and Oh \cite{KO08}, 2008), semi-dihedral groups (Oh \cite{Oh09}, 2009), cyclic groups (Kwon \cite{Kw13}, 2013).
In 2017 the first author \cite{Ch17} reduced the classification of RBCM$_t$'s on abelian groups to a problem about polynomial rings, and gave a complete classification for RBCM$_t$'s on abelian $2$-groups. In 2018 Yuan, Wang and Qu \cite{YWQ18} classified RBCM$_1$'s for the so-called minimal nonabelian metacyclic groups. For results on more general regular Cayley maps, see \cite{CDL22,CT14,DH19,DYL23,HR22}.

It is still challenging to study regular Cayley maps on nonabelian groups.
We propose a ``reduction method", through which known results about RBCM$_t$'s on simpler groups may be applicable.
A key ingredient is the following observation.
\begin{lem}  \label{lem:quotient}
Let $\mathcal{CM}(\Gamma,\Omega,\rho)$ be a RBCM$_{t}$ with skew-morphism $\varphi$.
Suppose $\Xi$ is a normal subgroup of $\Gamma$ which is contained in $\Gamma_+$ and invariant under $\varphi_+$.
Let $\overline{\Gamma}=\Gamma/\Xi$, and let $\overline{\Omega}$ denote the image of $\Omega$ under the quotient map $\Gamma\twoheadrightarrow\overline{\Gamma}$. Then $\rho$ induces a permutation $\overline{\rho}$ on $\overline{\Omega}$ and gives rise to a RBCM$_t$ $\mathcal{CM}(\overline{\Gamma},\overline{\Omega},\overline{\rho})$.
Furthermore, if $\mathcal{CM}(\Gamma,\Omega,\rho)$ has type II, then so does $\mathcal{CM}(\overline{\Gamma},\overline{\Omega},\overline{\rho})$.
\end{lem}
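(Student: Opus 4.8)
The plan is to show that the skew-morphism $\varphi$ descends to the quotient, and then to read off every required property by reduction. First I would check that $\mathcal{CM}(\overline{\Gamma},\overline{\Omega},\overline{\rho})$ is a legitimate Cayley map. Since $\Xi\le\Gamma^+=\ker\pi$ and, by Proposition~\ref{prop:RBCMt}(b), $\pi(\omega)=t\neq 1$ for every $\omega\in\Omega$ when $t>1$ (the case $t=1$ is parallel, with $\Gamma^+=\Gamma$ and $\varphi^+=\varphi$ an automorphism by Proposition~\ref{prop:RBCMt}(a)), no generator lies in $\Xi$, so $\bar1\notin\overline{\Omega}$; closure under inversion and the generating property pass to the quotient at once. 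Thus $\text{Cay}(\overline{\Gamma},\overline{\Omega})$ is well defined.

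The crux is to descend $\varphi$. I would first prove the key identity $\varphi^{k}(\xi)\in\Xi$ for every $\xi\in\Xi$ and every $k\ge0$: by Proposition~\ref{prop:RBCMt}(c) the map $\varphi$ preserves $\Gamma^+$ and restricts there to the automorphism $\varphi^+$, and $\Xi$ is $\varphi^+$-invariant by hypothesis, so the claim follows by iteration. Feeding this into the skew-morphism identity $\varphi(\eta\xi)=\varphi(\eta)\varphi^{\pi(\eta)}(\xi)$ shows $\varphi(\eta\xi)\equiv\varphi(\eta)\pmod{\Xi}$, whence $\overline{\varphi}(\eta\Xi):=\varphi(\eta)\Xi$ is well defined. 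As the image of a bijection it is a bijection of the finite set $\overline{\Gamma}$, it fixes $\bar1$, and reducing the defining identity—using that $\pi$ is constant on $\Xi$-cosets since $\ker\pi\supseteq\Xi$, and that $\overline{\varphi^{k}(\mu)}=\overline{\varphi}^{\,k}(\bar\mu)$—gives $\overline{\varphi}(\bar\eta\bar\mu)=\overline{\varphi}(\bar\eta)\,\overline{\varphi}^{\,\overline{\pi}(\bar\eta)}(\bar\mu)$ with $\overline{\pi}(\bar\eta)=\pi(\eta)$. Hence $\overline{\varphi}$ is a skew-morphism of $\overline{\Gamma}$, and since $\rho=\varphi|_{\Omega}$ with $\varphi(\Omega)=\Omega$ we get $\overline{\varphi}|_{\overline{\Omega}}=\overline{\rho}$; by the criterion of \cite{JS02} this makes $\mathcal{CM}(\overline{\Gamma},\overline{\Omega},\overline{\rho})$ regular. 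I expect this descent step—pinning down exactly why both hypotheses $\Xi\le\ker\pi$ and $\varphi^+(\Xi)=\Xi$ are needed—to be the main obstacle; everything after it is routine reduction.

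It then remains to check the numerology and the $t$-balance. The quotient map $\Omega\to\overline{\Omega}$ intertwines the $d$-cycle $\rho$ with $\overline{\rho}$, which forces $\overline{\rho}$ to be a single cycle whose length $\bar d=|\overline{\Omega}|$ divides $d$; consequently $t^2\equiv1\pmod{\bar d}$, so $t$ remains an admissible balance value. Reducing the relation~(\ref{eq:t-balance}) modulo $\Xi$ yields $\overline{\rho}(\bar\omega^{-1})=(\overline{\rho}^{\,t}(\bar\omega))^{-1}$ for all $\bar\omega\in\overline{\Omega}$, so the regular map $\mathcal{CM}(\overline{\Gamma},\overline{\Omega},\overline{\rho})$ is indeed a RBCM$_t$ of valency $\bar d\mid d$.

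Finally, for the type statement I would invoke Remark~\ref{rmk:ell-iso}, which says a RBCM$_t$ is of type II exactly when its generating set contains an element of order $2$. If $\mathcal{CM}(\Gamma,\Omega,\rho)$ is type II, pick $\omega\in\Omega$ with $\omega^2=1$; since $\omega\notin\Xi$ we have $\bar\omega\neq\bar1$ while $\bar\omega^2=\bar1$, so $\overline{\Omega}$ contains an order-$2$ element and the quotient is again type II.
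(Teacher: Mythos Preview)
Your proof is correct and follows essentially the same route as the paper: descend $\varphi$ to a skew-morphism on $\overline{\Gamma}$ and read off the remaining properties. The paper's argument is terser---it verifies well-definedness via the left-coset identity $\varphi(\xi\eta)=\varphi(\xi)\varphi(\eta)$ (using $\pi(\xi)=1$) rather than your right-coset version, and it concludes the RBCM$_t$ property via the power-function characterization in Proposition~\ref{prop:RBCMt}(b) rather than by reducing~(\ref{eq:t-balance}) directly---but these are cosmetic variations, and your added checks (that $\bar1\notin\overline{\Omega}$, that $\overline{\rho}$ is a single cycle, that $t^2\equiv1\pmod{\bar d}$) fill in details the paper leaves implicit.
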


\begin{proof}
For $\eta\in\Gamma$, let $\overline{\eta}$ denote its image under the quotient map $\Gamma\twoheadrightarrow\overline{\Gamma}$.

The map $\overline{\varphi}:\overline{\Gamma}\to\overline{\Gamma}$, $\overline{\eta}\mapsto \overline{\varphi(\eta)}$
is well-defined, as $\varphi(\xi\eta)=\varphi(\xi)\varphi(\eta)$ for any $\xi\in\Xi$. Let $\pi$ be the power function of $\mathcal{CM}(\Gamma,\Omega,\rho)$. It induces a function $\overline{\pi}:\overline{\Gamma}\twoheadrightarrow\{1,t\}$ in an obvious way.
For all $\eta,\mu$, we have
$$\overline{\varphi}(\overline{\eta}\overline{\mu})=\overline{\varphi(\eta\mu)}=\overline{\varphi(\eta)\varphi^{\pi(\eta)}(\mu)}=
\overline{\varphi}(\overline{\eta})\overline{\varphi}^{\overline{\pi}(\overline{\eta})}(\overline{\mu}).$$
So $\rho=\varphi|_\Omega$ induces a permutation $\overline{\rho}$ on $\overline{\Omega}$, building $\mathcal{CM}(\overline{\Gamma},\overline{\Omega},\overline{\rho})$ into a RBCM$_t$.

The assertion about type follows from the first sentence of Remark \ref{rmk:ell-iso}.
\end{proof}

The idea is, to understand a RBCM$_t$ $\mathcal{M}$ on $\Gamma$, we take a suitable subgroup $\Xi$, investigate the quotient RBCM$_t$ $\overline{\mathcal{M}}$ on $\Gamma/\Xi$, and use knowledge on $\overline{\mathcal{M}}$ to extract information about $\mathcal{M}$ as much as possible.

In this paper, we apply the reduction method to classify RBCM$_t$'s for a class of split metacyclic 2-groups.

A general {\it split metacyclic group} can be presented as
\begin{align}
\Lambda(n,m;r)=\langle \alpha,\beta\mid \alpha^{n}=\beta^{m}=1, \ \beta\alpha\beta^{-1}=\alpha^{r}\rangle, \label{eq:presentation}
\end{align}
for some positive integers $n,m,r$ such that $r^m\equiv 1\pmod{n}$; see \cite{GG09} Page 2.
We focus on $\Lambda(2^a,2^b;1+2^c)$, with
\begin{align}
\max\{2,a-b\}\le c\le a-3  \qquad \text{and} \qquad b\ne c.  \label{eq:restriction}
\end{align}
These groups constitute a major part of split metacyclic $2$-groups of Class A, as introduced on \cite{Cu07} Page 2. The artificial restriction (\ref{eq:restriction}) is imposed for simplicity, so that the paper has a clear structure and a moderate length;
if $b=c$ is allowed, then some annoying subtleties will arise, but nothing interesting will happen.

The main result is Theorem \ref{thm:main}.
As shown in \cite{YWQ18}, any metacyclic $p$-group for odd prime $p$ does not admit a RBCM$_1$; (by Proposition \ref{prop:RBCMt}, it does not admit a RBCM$_t$ for $t>1$). On the contrary, we shall see that the metacyclic $2$-group $\Lambda(2^a,2^b;1+2^c)$ admits a rich family of RBCM$_t$'s, consisting of $2^{a-c-1}$ isomorphism classes. To some extent, we can say that the richness and complexity of RBCM$_t$'s on metacyclic groups are concentrated on metacyclic $2$-groups.

Section 2 presents a preliminary on metacyclic groups. Section 3 comprises the main steps of classifying RBCM$_t$'s.
First, we combine Lemma \ref{lem:quotient} and the previous work \cite{Ch17} on RBCM$_t$'s on abelian $2$-groups to deduce several constraints on RBCM$_t$'s on metacyclic $2$-groups, stated as Lemma \ref{lem:constraint}. Second, based on the work \cite{CXZ18} on automorphisms of metacyclic groups,  we show that each RBCM$_t$ can be ``normalized", in the sense that it is isomorphic to one with the property that $\varphi_+$ and $\omega_d$ are in certain special forms. Third, we solve a system of congruence equations which characterize conditions for given data to determine a normalized RBCM$_t$. Finally we state the classification as Theorem \ref{thm:main}.

\begin{nota}
\rm For positive integers $u,s$, let $[u]_{s}=1+s+\cdots+s^{u-1}$; let $[0]_s=0$.

For $u\ne 0$, let $\|u\|$ denote the largest $k$ with $2^k\mid u$; set $\|0\|=+\infty$.

For an element $\theta$ of a finite group, let $|\theta|$ denote its order.

Let $\mathbb{Z}_n=\mathbb{Z}/n\mathbb{Z}$, which is a quotient ring of $\mathbb{Z}$.

For an abelian $2$-group $\Gamma$, let ${\rm rk}(\Gamma)$ denote its rank.

Given a normal subgroup $\Xi\vartriangleleft\Gamma$, the image of $\eta\in\Gamma$ under the quotient $\Gamma\twoheadrightarrow\Gamma/\Xi$ is usually denoted by $\overline{\eta}$, (but for $u\in\mathbb{Z}$, its image under $\mathbb{Z}\twoheadrightarrow\mathbb{Z}_n$ is still denoted by $u$), and if an automorphism $\phi$ of $\Gamma$ satisfies $\phi(\Xi)=\Xi$, then its induced automorphism on $\Gamma/\Xi$ is denoted by $\overline{\phi}$.

A RBCM$_t$ $\mathcal{CM}(\Gamma,\Omega,\rho)$ is shorten as $\mathcal{CM}(\Gamma,\Omega)$ if $\Omega$ can be written as $\{\omega_1,\ldots,\omega_d\}$ and $\rho(\omega_i)=\omega_{i+1}$. The subscript in $\omega_i$ is always understood as modulo $d$.
Let ${\rm Aut}^+(\Gamma)=\{\tau\in{\rm Aut}(\Gamma)\colon \tau(\Gamma_+)=\Gamma_+\}$.

Since various congruences modulo powers of $2$ will appear in the computations, to simplify the writing we use $A\equiv^{(k)}B$ to indicate $A\equiv B\pmod{2^k}$.
Furthermore, abbreviate $A\equiv^{(a-1)}B$ to $A\equiv B$, and $A\equiv^{(b)}B$ to $A\equiv'B$.
\end{nota}

\section{Preliminary on metacyclic groups}

A general element of $\Lambda=\Lambda(n,m;r)$ can be written as $\alpha^{x}\beta^{y}$.
By (\ref{eq:presentation}) we have
\begin{align}
\beta^{y}\alpha^{x}&=\alpha^{xr^{y}}\beta^{y}, \nonumber  \\
(\alpha^{x_{1}}\beta^{y_{1}})(\alpha^{x_{2}}\beta^{y_{2}})&=\alpha^{x_{1}+x_{2}r^{y_{1}}}\beta^{y_{1}+y_{2}},  \nonumber   \\
(\alpha^{x}\beta^{y})^{u}&=\alpha^{x[u]_{r^{y}}}\beta^{yu},  \label{eq:power} \\
[\alpha^{x_{1}}\beta^{y_{1}},\alpha^{x_{2}}\beta^{y_{2}}]&=\alpha^{x_{1}(1-r^{y_{2}})-x_{2}(1-r^{y_{1}})}. \nonumber
\end{align}
Here $r^y$ is understood as $r^{y-m[y/m]}$ if $y<0$, $[u]_{r^y}$ is understood as $[u-n[u/n]]_{r^y}$ if $u<0$, and the commutator $[\eta,\mu]=\eta\mu\eta^{-1}\mu^{-1}$. Consequently, the commutator subgroup is generated by $\langle\alpha^{r-1}\rangle$, hence the abelianization
\begin{align*}
\Lambda^{{\rm ab}}:=\Lambda/[\Lambda,\Lambda]\cong\mathbb{Z}_{(r-1,n)}\times\mathbb{Z}_{m}.
\end{align*}

\begin{lem}  \label{lem:index-2}
There are three index $2$ subgroups of $\Lambda=\Lambda(n,m;r)$, namely, $\langle \alpha^{2},\beta\rangle$, $\langle \alpha,\beta^{2}\rangle$ and $\langle \alpha^{2},\alpha\beta\rangle$.
\end{lem}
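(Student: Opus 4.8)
The plan is to use the standard correspondence between index-$2$ subgroups and surjections onto $\mathbb{Z}_2$. In any group $G$ a subgroup of index $2$ is normal, and the quotient map gives a surjection $G\twoheadrightarrow\mathbb{Z}_2$; conversely the kernel of any such surjection has index $2$. Since $\mathbb{Z}_2$ has trivial automorphism group, two surjections with the same kernel coincide, so index-$2$ subgroups are in bijection with the nonzero elements of $\mathrm{Hom}(G,\mathbb{Z}_2)$. Because $\mathbb{Z}_2$ is abelian, every homomorphism $\Lambda\to\mathbb{Z}_2$ factors through the abelianization, giving $\mathrm{Hom}(\Lambda,\mathbb{Z}_2)\cong\mathrm{Hom}(\Lambda^{\rm ab},\mathbb{Z}_2)$. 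Thus the entire problem is transported to the abelian group $\Lambda^{\rm ab}$.

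First I would invoke the computation $\Lambda^{\rm ab}\cong\mathbb{Z}_{(r-1,n)}\times\mathbb{Z}_m$ recorded just above the statement, with the two factors generated by the images $\bar\alpha,\bar\beta$ of $\alpha,\beta$. The count of \emph{three} subgroups requires both cyclic factors to have even order, i.e. $(r-1,n)$ and $m$ both even; equivalently $n,m$ even and $r$ odd, which is exactly the situation for the groups $\Delta$ under study (there $(r-1,n)=2^c$, $m=2^b$, and $r=1+2^c$ is odd). Under this evenness, $\mathrm{Hom}(\mathbb{Z}_{(r-1,n)}\times\mathbb{Z}_m,\mathbb{Z}_2)\cong\mathbb{Z}_2\times\mathbb{Z}_2$, which has precisely $2^2-1=3$ nonzero elements, matching the claimed count. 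It then remains to identify the three kernels.

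Each nonzero $f$ is determined by the pair $(f(\bar\alpha),f(\bar\beta))\in\{(1,0),(0,1),(1,1)\}$, with $f(\alpha^x\beta^y)=x\,f(\bar\alpha)+y\,f(\bar\beta)\bmod 2$ on a general element. Reading off the kernels: for $(1,0)$ the kernel is $\{\alpha^x\beta^y\colon x\ \text{even}\}=\langle\alpha^2,\beta\rangle$; for $(0,1)$ it is $\{\alpha^x\beta^y\colon y\ \text{even}\}=\langle\alpha,\beta^2\rangle$; for $(1,1)$ it is $\{\alpha^x\beta^y\colon x+y\ \text{even}\}$. In the first two cases the identification with the named subgroup is immediate (the generating set manifestly produces all elements of the stated parity, using $n,m$ even so that each subgroup is proper).

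The only step needing genuine verification, and the main obstacle, is the third case: that $\langle\alpha^2,\alpha\beta\rangle$ equals $\{\alpha^x\beta^y\colon x+y\ \text{even}\}$. The containment $\subseteq$ is clear since $f(\alpha^2)=f(\alpha\beta)=0$. For the reverse I would first exhibit $\beta^2$ inside $\langle\alpha^2,\alpha\beta\rangle$: by (\ref{eq:identity2}) one has $(\alpha\beta)^2=\alpha^{1+r}\beta^2$, hence $\beta^2=\alpha^{-(1+r)}(\alpha\beta)^2$, and since $r$ is odd, $1+r$ is even, so $\alpha^{-(1+r)}\in\langle\alpha^2\rangle$ and therefore $\beta^2\in\langle\alpha^2,\alpha\beta\rangle$. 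Given any $\alpha^x\beta^y$ with $x+y$ even, the exponents $x,y$ have the same parity; when both are even it lies in $\langle\alpha^2,\beta^2\rangle$, and when both are odd one factors off a single $\alpha\beta$ and reduces to the even case. Either way the element lies in $\langle\alpha^2,\alpha\beta\rangle$, giving the reverse containment and completing the identification of all three subgroups.
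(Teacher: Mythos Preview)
Your proof is correct and follows essentially the same route as the paper: pass to the abelianization $\Lambda^{\rm ab}\cong\mathbb{Z}_{(r-1,n)}\times\mathbb{Z}_m$, list the three nonzero homomorphisms to $\mathbb{Z}_2$, and read off their kernels. You are in fact more careful than the paper in two respects---you make explicit the parity hypothesis (both $(r-1,n)$ and $m$ even, equivalently $n,m$ even and $r$ odd) without which the count of three fails, and you actually verify that $\langle\alpha^2,\alpha\beta\rangle$ coincides with the parity-$(1,1)$ kernel rather than merely asserting it.
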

\begin{proof}
Each homomorphism $\Lambda\to\mathbb{Z}_2$ factors through $\Lambda^{{\rm ab}}$, and there are exactly three epimorphisms $\kappa_j:\Lambda^{{\rm ab}}\cong\mathbb{Z}_{(r-1,n)}\times\mathbb{Z}_{m}\twoheadrightarrow\mathbb{Z}_2, j=1,2,3$, given by
\begin{align*}
\kappa_1(u,v)=u, \qquad \kappa_2(u,v)=v, \qquad \kappa_3(u,v)=u+v.
\end{align*}
Let $\widetilde{\kappa}_j$ denote the composite of the quotient $\Lambda\twoheadrightarrow\Lambda^{{\rm ab}}$ and $\kappa_j$. It is easy to see that $\ker\widetilde{\kappa}_1=\langle\alpha^2,\beta\rangle$, $\ker\widetilde{\kappa}_2=\langle\alpha,\beta^2\rangle$, $\ker\widetilde{\kappa}_3=\langle\alpha^2,\alpha\beta\rangle$.
\end{proof}

The following is a special case of \cite{CXZ18} Theorem 2.9, in which, $\Lambda_1=\{2\}$, $\Lambda_2=\Lambda'=\emptyset$, $a_2=c_2=a$, $b_2=b$, $d_2=c$, $t=2^a$, $m=2^b$, $m_0=1$.

If there is an automorphism $\sigma$ of $\Lambda(n,m;r)$ sending $\alpha$ and $\beta$ to $\alpha^{x_1}\beta^{y_1}$ and $\alpha^{x_2}\beta^{y_2}$, respectively, then we denote such automorphism $\sigma$ by $\sigma^{x_1,y_1}_{x_{2},y_{2}}$ in this paper.
\begin{lem} \label{lem:auto}
Suppose $\|r-1\|=c\ge 2$. Each automorphism of $\Lambda(2^a,2^b;r)$ is given by
$\sigma^{x_1,y_1}_{x_{2},y_{2}}:\alpha\mapsto\alpha^{x_1}\beta^{y_1}, \beta\mapsto\alpha^{x_2}\beta^{y_2}$
for some integers $x_{1},y_{1},x_{2},y_{2}$ with
\begin{align*}
2&\nmid x_{1}y_{2}-x_{2}y_{1}, \qquad \|y_1\|\ge b-c, \qquad \|x_2\|\ge a-b, \\
y_2&\equiv^{(a-c)}\begin{cases}  1+2^{a-c-1}, &\text{if}\ b=a-c=\|y_{1}\|+c, \\  1,&\text{otherwise}. \end{cases}
\end{align*}
Actually, any $x_{1},y_{1},x_{2},y_{2}$ satisfying these define an automorphism.

Acting on general elements,
\begin{align*}
\sigma^{x_1,y_1}_{x_{2},y_{2}}(\alpha^{u}\beta^{v})=\alpha^{x_{1}[u]_{r^{y_{1}}}+r^{y_{1}u}x_{2}[v]_{r^{y_{2}}}}\beta^{y_{1}u+y_{2}v}.
\end{align*}
\end{lem}

Given $\sigma^{x_1,y_1}_{x_2,y_2}$ and $\sigma^{p_1,q_1}_{p_2,q_2}$, the composite $\sigma^{p_1,q_1}_{p_2,q_2}\circ\sigma^{x_1,y_1}_{x_2,y_2}$ sends $\alpha$ to $\alpha^{h_1}\beta^{q_1x_1+q_2y_1}$ and sends $\beta$ to $\alpha^{h_2}\beta^{q_1x_2+q_2y_2}$, with
\begin{align*}
h_j=p_1[x_j]_{r^{q_1}}+r^{q_1x_j}p_2[y_j]_{r^{q_2}}, \qquad j=1,2.
\end{align*}

Let $r=1+2^c$. Since $2(\|q_1\|+c)\ge b+c\ge a$, we have $r^{q_1u}\equiv^{(a)}1+2^cq_1u$, so
\begin{align*}
[x_j]_{r^{q_1}}&={\sum}_{i=0}^{x_j-1}r^{iq_1}\equiv^{(a)}x_j+2^{c-1}q_1x_j(x_j-1), \\
r^{q_1x_j}p_2&\equiv^{(a)}p_2+2^cq_1x_jp_2\equiv^{(a)}p_2.
\end{align*}

Suppose $c>b$ which will hold in the next section. Then $\|x_2\|>a-c$, $\|p_2\|>a-c$, so that $2^{c-1}x_2\equiv^{(a)}0$, and $p_2[y_j]_{r^{q_2}}\equiv^{(c)}p_2y_j$, implying
$$h_1\equiv^{(a)}p_1x_1(1+2^{c-1}q_1(x_1-1))+p_2y_1, \qquad  h_2\equiv^{(a)}p_1x_2+p_2y_2.$$
Thus
\begin{align}
\sigma^{p_1,q_1}_{p_2,q_2}\circ\sigma^{x_1,y_1}_{x_2,y_2}=\sigma^{p_1x_1(1+2^{c-1}q_1(x_1-1))+p_2y_1,q_1x_1+q_2y_1}_{p_1x_2+p_2y_2,q_1x_2+q_2y_2}. \label{eq:composite}
\end{align}

\section{Classifying regular $t$-balanced Cayley maps for a class of split metacyclic 2-groups}

Let $\Delta=\Lambda(2^a,2^b;1+2^c)$ for $(a,b,c)$ satisfying (\ref{eq:restriction}). In particular, $b\ge 3$, $c\ge 2$.

By \cite{CXZ18} Lemma 2.1, $\|[u]_{(1+2^c)^{y}}\|=\|u\|$. Then by (\ref{eq:power}),
\begin{align}
|\alpha^{x}\beta^{y}|=2^{\max\{a-\|x\|, b-\|y\|\}}.   \label{eq:order}
\end{align}

Suppose $\mathcal{CM}(\Delta,\{\omega_1,\ldots,\omega_d\})$ is a RBCM$_t$ with skew-morphism $\varphi$.
As in Remark \ref{rmk:ell-iso}, we may further assume $\ell\in\{(t-1,d)/2,(t-1,d)\}$, so
\begin{align}
\omega_{\ell+ti}=\omega_i^{-1}, \qquad i=1,\ldots,d. \label{eq:inverse}
\end{align}
Let
$\eta_j=\omega_j\omega_{j-1}^{-1}=\omega_j\omega_{\ell+t(j-1)}$.
Then
\begin{align}
\Delta_+&=\langle\eta_1,\ldots,\eta_d\rangle,  \label{eq:Delta+}  \\
\omega_i\omega_d^{-1}&=\eta_i\cdots\eta_1,  \qquad  i=1,\ldots,d;  \nonumber
\end{align}
in particular,
\begin{align}
\omega_d^{-2}=\omega_{\ell}\omega_d^{-1}=\eta_\ell\cdots\eta_1.  \label{eq:eta-ell-1}
\end{align}
Moreover, $\varphi(\omega_j\omega_{\ell+t(j-1)})=\omega_{j+1}\varphi^{t}(\omega_{\ell+t(j-1)})=\omega_{j+1}\omega_{\ell+tj}$, i.e.,
\begin{align}
\varphi_+(\eta_j)=\eta_{j+1}.    \label{eq:phi-eta}
\end{align}

\subsection{Constraints}

\begin{lem}  \label{lem:RBCMt-ab}
Suppose $\mathcal{CM}(\Gamma,\{\mu_1,\ldots,\mu_m\})$ is a RBCM$_t$ with skew-morphism $\psi$, and $\Gamma$ is an abelian 2-group such that ${\rm rk}(\Gamma)={\rm rk}(\Gamma_+)=2$. Then
\begin{enumerate}
  \item[\rm(i)] there exists an isomorphism $\Gamma_+\cong\mathbb{Z}_{2^{k'}}\times\mathbb{Z}_{2^{k}}$ for some $k'\ge k\ge1$, sending
       $\theta_1$ to $(1,0)$ and $\theta_2$ to $(-1,1)$, where $\theta_j=\mu_j-\mu_{j-1}$;
  \item[\rm(ii)] $\mathcal{CM}(\Gamma,\{\mu_1,\ldots,\mu_m\})$ has type I, and $m=2^{k+1}\mid t+1$;
  \item[\rm(iii)] $\psi_+^2={\rm id}$.
\end{enumerate}
\end{lem}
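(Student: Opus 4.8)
I work additively (so the skew-morphism relation reads $\psi(\eta+\mu)=\psi(\eta)+\psi^{\pi(\eta)}(\mu)$) and assume $t>1$, so that by Proposition~\ref{prop:RBCMt}(b)--(c) the subgroup $\Gamma^{+}=\ker\pi$ has index $2$ and, exactly as in (\ref{eq:Delta+}), $\Gamma^{+}=\langle\theta_{1},\dots,\theta_{\tilde d}\rangle$; moreover $\psi^{+}:=\psi|_{\Gamma^{+}}$ is an automorphism. Repeating verbatim the computation that gave $\varphi(\eta_{j})=\eta_{j+1}$ (using $\pi(\mu_{j})=t$ and $-\mu_{j-1}=\mu_{\ell+t(j-1)}$) yields the shift property $\psi^{+}(\theta_{j})=\theta_{j+1}$. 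Hence $\Gamma^{+}$ is the cyclic $\mathbb{Z}[\psi^{+}]$-module generated by $\theta_{1}$, the telescoping identity $\sum_{j=1}^{\tilde d}\theta_{j}=0$ becomes the operator identity $\sum_{i=0}^{\tilde d-1}(\psi^{+})^{i}=0$ (so in particular $(\psi^{+})^{\tilde d}=\mathrm{id}$), and expanding $-\theta_{1}=\sum_{i=1}^{t}\theta_{\ell+i}$ gives
\begin{equation*}
(\psi^{+})^{\ell}\sum_{i=0}^{t-1}(\psi^{+})^{i}=-\mathrm{id}\quad\text{on }\Gamma^{+}.\tag{$\star$}
\end{equation*}

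Next I pass to the Frattini quotient $V=\Gamma^{+}/2\Gamma^{+}\cong\mathbb{F}_{2}^{2}$, on which $\psi^{+}$ induces $\bar\psi\in GL_{2}(\mathbb{F}_{2})\cong S_{3}$ with $\bar\psi(\bar\theta_{j})=\bar\theta_{j+1}$. Since the $\theta_{j}$ generate $\Gamma^{+}$, the $\bar\theta_{j}$ span $V$, and they are all nonzero because $\bar\psi$ is invertible; if $\bar\theta_{1}=\bar\theta_{2}$, then $\bar\theta_{1}$ would be $\bar\psi$-fixed, forcing every $\bar\theta_{j}$ to be equal and to span only a line, a contradiction. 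Thus $\{\bar\theta_{1},\bar\theta_{2}\}$ is a basis of $V$, whence $\theta_{1},\theta_{2}$ generate $\Gamma^{+}$ and $\bar\psi$ has order $2$ or $3$.

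The crux of the proof --- and the step I expect to be the main obstacle --- is to upgrade this to $(\psi^{+})^{2}=\mathrm{id}$, which is part (iii). In the generating pair $(\theta_{1},\theta_{2})$ the operator $\psi^{+}$ is the companion matrix of $x^{2}-bx-a$, where $\theta_{3}=a\theta_{1}+b\theta_{2}$, and the goal is to force $a=1,\ b=0$, i.e. $\theta_{3}=\theta_{1}$. Reducing $(\star)$ modulo $2$ only constrains $t$ and $\ell$ modulo $3$ and does \emph{not} by itself exclude $\bar\psi$ of order $3$; the exclusion of order $3$, together with the sharpening of ``order $2$ on $V$'' to an honest involution on $\Gamma^{+}$, must be extracted from the integral identities $(\star)$ and $\sum_{i}(\psi^{+})^{i}=0$ combined with $t^{2}\equiv1\pmod{\tilde d}$ (note $(\psi^{+})^{t^{2}}=\psi^{+}$), for instance by feeding the companion form into $(\star)$ and lifting the resulting relation modulo successive powers of $2$. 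Once $\theta_{3}=\theta_{1}$ is proved, the fact that $\theta_{1},\theta_{2}$ generate gives $(\psi^{+})^{2}=\mathrm{id}$.

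With (iii) in hand the remainder is bookkeeping. Period $2$ gives $\theta_{j}=\theta_{1}$ or $\theta_{2}$ according to the parity of $j$, so $\gamma:=\theta_{1}+\theta_{2}$ satisfies $\mu_{j+2}-\mu_{j}=\gamma$; distinctness of the $\mu_{j}$ gives $\mathrm{ord}(\gamma)\ge\tilde d/2$, while $\tfrac{\tilde d}{2}\gamma=\sum_{j}\theta_{j}=0$ gives $\mathrm{ord}(\gamma)\mid\tilde d/2$, so $\mathrm{ord}(\gamma)=\tilde d/2=:2^{k}$ and $\tilde d=2^{k+1}$. Since $\theta_{1}$ has maximal order $2^{k'}$ in $\Gamma^{+}$ it splits off a cyclic summand, and a direct computation of orders (using $\mathrm{ord}(\theta_{2})=\mathrm{ord}(\psi^{+}\theta_{1})=\mathrm{ord}(\theta_{1})$) realizes $\Gamma^{+}\cong\mathbb{Z}_{2^{k'}}\times\mathbb{Z}_{2^{k}}$ with $\theta_{1}\mapsto(1,0)$, $\theta_{2}=-\theta_{1}+\gamma\mapsto(-1,1)$ and $k'=\max(k',k)\ge k$, which is (i). Finally I feed $(\psi^{+})^{2}=\mathrm{id}$ back into $(\star)$: writing $t=2s+1$, the identity splits by the parity of $\ell$; the case $\ell$ even forces $t\equiv1\pmod{\tilde d}$, a balanced map contrary to $t>1$, while $\ell$ odd yields $(s+1)\gamma=0$, i.e. $2^{k+1}=\tilde d\mid t+1$, and then $(t-1,\tilde d)=2\nmid\ell$, so $\Omega$ contains no involution and $\mathcal{M}$ has type I. This gives (ii).
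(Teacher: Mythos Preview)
The paper does not actually prove this lemma: its ``proof'' is a one-line citation of results from \cite{Ch17} (Section~4.2, Corollaries~4.3 and~4.7), observing that the rank hypothesis singles out one case there. So your attempt is not comparable to the paper's argument; you are trying to give a self-contained proof where the paper outsources everything.

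That said, your proposal has a real gap, and you flag it yourself. The entire argument hinges on part~(iii), i.e.\ on showing $\theta_{3}=\theta_{1}$, and you do not prove this: you only assert that it ``must be extracted'' from $(\star)$, the vanishing of $\sum_i(\psi^+)^i$, and $t^2\equiv1\pmod{\tilde d}$, ``for instance by feeding the companion form into $(\star)$ and lifting \dots\ modulo successive powers of~$2$''. That is a plan, not a proof. In particular you have not excluded the possibility that $\bar\psi$ has order~$3$ in $GL_2(\mathbb{F}_2)$, nor, in the order-$2$ case on the Frattini quotient, shown that the involution lifts to an honest involution on $\Gamma^+$ rather than an element of $2$-power order congruent to an involution modulo~$2$. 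Until this is done, nothing downstream (the determination of $\tilde d$, the structure of $\Gamma^+$, the type) is established.

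There are also smaller issues in the ``bookkeeping'' part that would need tightening even after (iii) is available. Your claim that ``$\ell$ even forces $t\equiv1\pmod{\tilde d}$'' is not what $(\star)$ gives: with $(\psi^+)^2=\mathrm{id}$ and $t=2s+1$, $\ell$ even yields $(s+2)\theta_1+s\theta_2=0$, which is a relation in $\Gamma^+$, not directly $t\equiv1$; you still need the structure of $\Gamma^+$ (which you are in the process of deriving) to turn this into a contradiction. Likewise, the passage from ``$\theta_1$ has maximal order and $\mathrm{ord}(\gamma)=2^k$'' to the explicit isomorphism sending $\theta_1\mapsto(1,0)$, $\theta_2\mapsto(-1,1)$ requires checking that $\langle\theta_1\rangle\cap\langle\gamma\rangle=0$, i.e.\ that $\gamma$ really sits in a complementary summand; this is not automatic and should be argued.
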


\begin{proof}
RBCM$_t$'s on abelian $2$-groups were completely classified in \cite{Ch17} Section 4.2, Corollary 4.3 and Corollary 4.7; obviously ${\rm rk}(\Gamma)={\rm rk}(\Gamma_+)=2$ only occurs in the last case of Section 4.2. The conditions (i)--(iii) can be easily verified.
\end{proof}

\begin{lem} \label{lem:constraint}
For our RBCM$_t$ $\mathcal{CM}(\Delta,\{\omega_1,\ldots,\omega_d\})$, the following holds:
\begin{enumerate}
  \item[\rm(i)] it has type I, with $\Delta_+=\langle\alpha^2,\beta\rangle\cong\Lambda(2^{a-1},2^b;1+2^c)$;
  \item[\rm(ii)] $c>b$, and $\|t+1\|>b$;
  \item[\rm(iii)] $\varphi_+=\sigma^{x_1,y_1}_{x_2,y_2}$ for some $x_1,y_1,x_2,y_2$ with $2\nmid y_1$ and
        $$x_1^2+x_2y_1\equiv^{(c-1)}1, \qquad  x_1+y_2\equiv'y_2^2+x_2y_1-1\equiv'0.$$
\end{enumerate}
\end{lem}

\begin{rmk}\label{rmk:care}
\rm As a consequence of (ii), $c\ge 4$.

Be careful: here $\varphi_+=\sigma^{x_1,y_1}_{x_2,y_2}$ means that it sends $\alpha^2$ to $\alpha^{2x_1}\beta^{y_1}$ and sends $\beta$ to $\alpha^{2x_2}\beta^{y_2}$.
\end{rmk}

\begin{proof}
The proof consists of three parts.
\begin{enumerate}
  \item Assume $\Delta_+=\langle\alpha^2,\alpha\beta\rangle$, $\eta_j=\alpha^{u_j}\beta^{v_j}$ ($j=1,\ldots,d$), and
        $$\varphi_+(\alpha^2)=(\alpha^2)^{x_1}(\alpha\beta)^{y_1}, \qquad \varphi_+(\alpha\beta)=(\alpha^2)^{x_2}(\alpha\beta)^{y_2}.$$
        Since $|(\alpha^2)^{x_2}(\alpha\beta)^{y_2}|=|\varphi_+(\alpha\beta)|=|\alpha\beta|$, by (\ref{eq:order}) we have $2\nmid y_2$;
        from
        $$1=\varphi_+\big((\alpha^2)^{2^{a-1}}\big)=((\alpha^2)^{x_1}(\alpha\beta)^{y_1})^{2^{a-1}}=(\alpha^{2x_1+[y_1]_{1+2^c}}\beta^{y_1})^{2^{a-1}}$$
        we see $2\mid y_1$. From (\ref{eq:phi-eta}) we see that all the $v_j$'s have the same parity, which, by (\ref{eq:Delta+}), must be odd.
        By (\ref{eq:eta-ell-1}), $2\mid v_1+\cdots+v_\ell$, so $\ell$ is even.

        On the other hand, one can verify that the subgroup
        $$\Xi=\big\langle\alpha^{2^c},\beta^{2^{c}}\big\rangle=\big\langle\alpha^{2^{c}},(\alpha\beta)^{2^c}\big\rangle$$
        is normal in $\Delta$ and invariant under $\varphi_+$. By Lemma \ref{lem:quotient} there is a quotient RBCM$_t$ $\overline{\mathcal{M}}$ on $\Delta/\Xi\cong\mathbb{Z}_{2^c}\times\mathbb{Z}_{2^c}$.
        Clearly ${\rm rk}(\Delta/\Xi)={\rm rk}(\Delta_+/\Xi)=2$, hence by Lemma \ref{lem:RBCMt-ab}, $\overline{\mathcal{M}}$ has type I, and
        $4\mid t+1$. By Lemma \ref{lem:quotient}, our RBCM$_t$ has type I. Hence $\ell=(t-1,d)/2$, contradicting $2\mid\ell$.
  \item Assume $\Delta_+=\langle\alpha,\beta^2\rangle$, $\eta_j=\alpha^{u_j}\beta^{2v_j}$ ($j=1,\ldots,d$)
        and $\varphi_+=\sigma^{x_1,y_1}_{x_2,y_2}$.
        Since $\Delta_+\cong\Lambda(2^a,2^{b-1};(1+2^c)^2)$, by Lemma \ref{lem:auto} we have
        \begin{align}
        \|y_1\|\ge b-c-2,  \quad \|x_2\|\ge a-b+1, \quad   \|y_2-1\|\ge a-c-2.  \label{ineq:deg-x2-y1-y2}
        \end{align}
        The subgroup $\Xi'=\big\langle\alpha^{2^c},\beta^{2^{b-1}}\big\rangle$ is normal in $\Delta$ and invariant under $\varphi_+$, with $\Delta/\Xi'\cong\mathbb{Z}_{2^c}\times\mathbb{Z}_{2^{b-1}}$ and $\Delta_+/\Xi'\cong\mathbb{Z}_{2^c}\times\mathbb{Z}_{2^{b-2}}$.
        In $\Delta_+/\Xi'$,
        \begin{align*}
        \overline{\eta_1}&=u_1\overline{\alpha}+v_1\overline{\beta^2},   \\
        \overline{\eta_1}+\overline{\eta_2}&=((x_1+1)u_1+x_2v_1)\overline{\alpha}+(y_1u_1+(y_2+1)v_1)\overline{\beta^2}.
        \end{align*}
        By Lemma \ref{lem:quotient} and Lemma \ref{lem:RBCMt-ab}, $4\mid t+1$ and $\ell=(t-1,d)/2$, so that $2\nmid\ell$.

        If $2\mid x_2$, then $2\nmid x_1$, so that $u_j\equiv u_1\pmod{2}$; by (\ref{eq:Delta+}), $2\nmid u_1$, hence $\eta_{\ell}\cdots\eta_1=\alpha^{u'}\beta^{v'}$ for some odd $u'$, but this contradicts (\ref{eq:eta-ell-1}).
        Hence $2\nmid x_2$, and consequently
        $b-1\ge a\ge c+3$.

        By Lemma \ref{lem:RBCMt-ab} (i), $|\overline{\eta_1}|=2^{b-2}$ and $|\overline{\eta_1}+\overline{\eta_2}|=2^c$. Hence $2\nmid v_1$, and
        $$c\ge b-2-\|y_1u_1+(y_2+1)v_1\|=b-3;$$
        the inequality comes from (\ref{eq:order}), and the equality relies on (\ref{ineq:deg-x2-y1-y2}) which implies $\|y_1\|\ge b-c-2\ge 2>\|y_2+1\|=1$. This contradicts $b-1\ge c+3$.
  \item Therefore by Lemma \ref{lem:index-2},
        $\Delta_+=\langle\alpha^2,\beta\rangle\cong\Lambda(2^{a-1},2^b;1+2^c)$. Suppose $\varphi_+=\sigma^{x_1,y_1}_{x_2,y_2}$,
        and $\eta_j=\alpha^{2u_j}\beta^{v_j}$.

        The subgroup $\langle\alpha^{2^c}\rangle$ is normal in $\Delta$ and invariant under $\varphi_+$. Applying Lemma \ref{lem:RBCMt-ab} to the quotient RBCM$_t$ on $\Delta/\langle\alpha^{2^c}\rangle\cong\mathbb{Z}_{2^c}\times\mathbb{Z}_{2^b}$, we obtain $4\mid t+1$ and $\ell=(t-1,d)/2$. So $2\nmid\ell$.

        If $2\mid y_1$, then $v_j\equiv v_1\pmod{2}$ for all $j$;
        by (\ref{eq:Delta+}), $2\nmid v_1$, and since $2\nmid \ell$, we have
        $\eta_{\ell}\cdots\eta_1=\alpha^{2u'}\beta^{v'}$ for some odd $v'$, contradicting (\ref{eq:eta-ell-1}).
        Hence $2\nmid y_1$, and then by Lemma \ref{lem:auto}, $b\le c+\|y_1\|=c$.
        Since $b\ne c$, actually $c>b$.

        By Lemma \ref{lem:RBCMt-ab} (iii), $\overline{\varphi_+}^2={\rm id}$.
        The expression for $\overline{\varphi_+}^2$ is
        \begin{align*}
        \overline{\alpha^2}&\mapsto (x_1^2+x_2y_1)\overline{\alpha^2}+y_1(x_1+y_2)\overline{\beta}, \\
        \overline{\beta}&\mapsto x_2(x_1+y_2)\overline{\alpha^2}+(x_2y_1+y_2^2)\overline{\beta}.
        \end{align*}
        Thus, $x_1^2+x_2y_1\equiv^{(c-1)}1$ and $x_1+y_2\equiv'y_2^2+x_2y_1-1\equiv'0$.
\end{enumerate}
\end{proof}

\subsection{Normalization}

\begin{lem}\label{lem:restriction}
Let $\sigma^{z_1,w_1}_{z_2,w_2}\in{\rm Aut}(\Delta_+)$. There exists $\tau\in{\rm Aut}^+(\Delta)$ with $\tau_+=\sigma^{z_1,w_1}_{z_2,w_2}$ if and only if $2\mid w_1$ and $\|w_2-1\|\ge a-c$.
\end{lem}

\begin{proof}
If $\tau=\sigma^{p_1,q_1}_{p_2,q_2}\in{\rm Aut}^+(\Delta)$, then by Lemma \ref{lem:auto}, $2\mid p_2$ and $\|q_2-1\|\ge a-c$.
As an automorphism of $\Delta_+$, $\tau_+(\alpha^2)=\alpha^{p_1(2+2^cq_1)}\beta^{2q_1}$, $\tau_+(\beta)=\alpha^{p_2}\beta^{q_2}$,
hence
\begin{align*}
\tau_+=\sigma^{p_1(1+2^{c-1}q_1),2q_1}_{p_2/2,q_2}.
\end{align*}
So $2\mid w_1$ and $\|w_2-1\|\ge a-c$ are necessary for there to exist $\tau\in{\rm Aut}^+(\Delta)$ with $\tau_+=\sigma^{z_1,w_1}_{z_2,w_2}$.

Conversely, suppose $2\mid w_1$ and $\|w_2-1\|\ge a-c$. Put
$$\tau=\sigma^{(1-2^{c-2}w_1)z_1,w_1/2}_{2z_2,w_2}.$$
It is clear that $\tau\in{\rm Aut}^+(\Delta)$ with $\tau_+=\sigma^{z_1,w_1}_{z_2,w_2}$.
\end{proof}

\begin{lem}  \label{lem:quadratic}
Suppose $h$ is odd, $e>2$, and $s^2\equiv^{(e)}h$. There exists a sequence $\{\tilde{s}_k\}_{k=2}^{\infty}$ such that $\tilde{s}_k^2\equiv^{(k(e-1))}h$ and $\tilde{s}_{k+1}\equiv^{(k(e-1)-1)}\tilde{s}_k$ for each $k$.
Consequently, for each $\tilde{e}>e$, there exists $\tilde{s}$ such that $\tilde{s}^2\equiv^{(\tilde{e})}h$ and $\tilde{s}\equiv^{(e-1)}s$.
\end{lem}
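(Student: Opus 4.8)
The plan is to prove this by a $2$-adic Hensel-type successive approximation, lifting the approximate square root $s$ step by step. One caveat should be recorded first: the statement needs $s$ to be odd—otherwise it can fail, as one sees by taking $h=0$, $e=4$, $s=4$, where no $\tilde s_2$ with $\tilde s_2^2\equiv 0\pmod{2^{6}}$ and $\tilde s_2\equiv 4\pmod{2^{3}}$ exists. This is harmless for us, since in every application $h$ is a unit; so I assume $2\nmid s$ throughout.

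The heart of the argument is a single lifting step, which I would isolate as follows. Suppose $\sigma$ is odd with $\sigma^2\equiv h\pmod{2^f}$, say $\sigma^2=h+2^f w$, where $f\ge 2$. Pick $\bar\sigma$ with $\sigma\bar\sigma\equiv 1\pmod{2^{f-2}}$ (possible since $\sigma$ is a unit) and set $\sigma'=\sigma-2^{f-1}w\bar\sigma$. Expanding gives $(\sigma')^2=\sigma^2-2^f w\sigma\bar\sigma+2^{2f-2}w^2\bar\sigma^2=h+2^f w(1-\sigma\bar\sigma)+2^{2f-2}w^2\bar\sigma^2$, and since $2^{f-2}\mid 1-\sigma\bar\sigma$ the middle term is divisible by $2^{2f-2}$. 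Hence $(\sigma')^2\equiv h\pmod{2^{2f-2}}$, while $\sigma'\equiv\sigma\pmod{2^{f-1}}$ and $\sigma'$ remains odd. This is just the Newton step for $x^2-h$; the characteristic-$2$ loss of precision (the derivative $2\sigma$ is not a unit) is exactly why the modulus improves from $2^f$ only to $2^{2f-2}$ rather than to $2^{2f}$.

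I would then build the sequence by iterating this step. Writing $f_k=k(e-1)$, the base case applies the lift once to $s$ with $f=e$, yielding $\tilde s_2$ with $\tilde s_2^2\equiv h\pmod{2^{f_2}}$ and $\tilde s_2\equiv s\pmod{2^{e-1}}$; the inductive step applies the lift to $\tilde s_k$ with $f=f_k$, yielding $\tilde s_{k+1}$ with $\tilde s_{k+1}^2\equiv h\pmod{2^{2f_k-2}}$ and $\tilde s_{k+1}\equiv\tilde s_k\pmod{2^{f_k-1}}$. The one thing that must be checked—and this is the crux of the lemma—is that the quadratic gain of the Newton step still outpaces the prescribed \emph{linear} growth of the required moduli, i.e. that $2f_k-2\ge f_{k+1}$. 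This reduces to $(k-1)(e-1)\ge 2$, which holds precisely because $k\ge 2$ and $e\ge 3$; this is where the hypothesis $e\ge 3$ enters. Granting it, $\tilde s_{k+1}^2\equiv h\pmod{2^{f_{k+1}}}$, and the congruence $\tilde s_{k+1}\equiv\tilde s_k\pmod{2^{k(e-1)-1}}$ is exactly the one asserted.

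Finally, for the ``consequently'' clause I would, given $\tilde e>e$, choose any $k$ with $k(e-1)\ge\tilde e$ and set $\tilde s=\tilde s_k$; then $\tilde s^2\equiv h\pmod{2^{\tilde e}}$ is immediate. For $\tilde s\equiv s\pmod{2^{e-1}}$ I would telescope the relations $\tilde s_{j+1}\equiv\tilde s_j\pmod{2^{j(e-1)-1}}$ together with $\tilde s_2\equiv s\pmod{2^{e-1}}$, using that every modulus appearing is at least $2^{e-1}$ (since $j(e-1)-1\ge 2e-3\ge e-1$ for $e\ge 3$), so all the approximations agree with $s$ modulo $2^{e-1}$. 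Thus the only real obstacle is the exponent bookkeeping that guarantees $2f_k-2\ge f_{k+1}$; once that inequality is secured, everything else is routine.
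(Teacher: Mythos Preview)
Your proof is correct and follows the same Hensel/Newton-lifting strategy as the paper, resting on the identical key inequality $2k(e-1)-2\ge(k+1)(e-1)$ (equivalently $(k-1)(e-1)\ge 2$). Your version is in fact a bit more careful than the paper's: by using the genuine Newton correction $\sigma'=\sigma-2^{f-1}w\bar\sigma$ with an explicit inverse, you make the cross-term cancellation transparent, and your remark that $s$ must be odd is well taken---the paper's bare increment $\tilde s_{k+1}=\tilde s_k+2^{k(e-1)-1}u_k$ implicitly needs the same hypothesis (and really an inverse factor) for the cross term $2^{k(e-1)}u_k(\tilde s_k-1)$ to vanish to the required depth.
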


\begin{proof}
We construct $\tilde{s}_k$ recursively.
Since $s$ is odd, we may take $a_1\in\mathbb{Z}$ such that $h\equiv^{(2(e-1))}s^2+2^esa_1$. Set $\tilde{s}_2=s+2^{e-1}a_1$. Then clearly $\tilde{s}_2^2\equiv^{(2(e-1))}h$ and $\tilde{s}_2\equiv^{(e-1)}s$.

Assume $k\ge 2$ and $\tilde{s}_k$ has been obtained. Take $a_k\in\mathbb{Z}$ with $$h\equiv^{((k+1)(e-1))}\tilde{s}_k^2+2^{k(e-1)}\tilde{s}_ka_k,$$
and set
$$\tilde{s}_{k+1}=\tilde{s}_k+2^{k(e-1)-1}a_k.$$
Then $\tilde{s}_{k+1}\equiv^{(k(e-1)-1)}\tilde{s}_k$ and $\tilde{s}_{k+1}^2\equiv^{((k+1)(e-1))}h$, due to $2k(e-1)-2\ge (k+1)(e-1)$.
\end{proof}

\begin{lem}  \label{lem:normal-auto}
There exists $\tau_1\in{\rm Aut}^+(\Delta)$ such that $(\tau_1\varphi\tau_1^{-1})_+=\sigma^{z,1}_{0,-z}$ for some $z\equiv^{(c-2)}-1$.
\end{lem}

\begin{proof}
We are going to find $u_1,v_1,u_2,v_2, z,w$ satisfying the following:
\begin{align}
u_1x_1(1+2^{c-1}v_1(x_1-1))+u_2y_1&\equiv zu_1(1+2^{c-1}(u_1-1)), \label{eq:I2-x-1} \\
v_1x_1+v_2y_1&\equiv'u_1+wv_1, \label{eq:I2-y-1} \\
u_1x_2+u_2y_2&\equiv zu_2, \label{eq:I2-x-2}  \\
v_1x_2+v_2y_2&\equiv'u_2+wv_2. \label{eq:I2-y-2}
\end{align}
In view of (\ref{eq:composite}), these will ensure
$$\sigma^{u_1,v_1}_{u_2,v_2}\circ\varphi_+=\sigma^{z,1}_{0,w}\circ\sigma^{u_1,v_1}_{u_2,v_2}.$$

Take $\gamma$ with $(1+2^{c-1}(y_1-1))\gamma\equiv 1$, and let
$$f(x)=(x-\gamma x_1)(x-y_2)-x_2y_1=\Big(x-\frac{\gamma x_1+y_2}{2}\Big)^2-x_2y_1-\Big(\frac{\gamma x_1+y_2}{2}\Big)^2.$$
Remember that $c\ge a-b>a-c\ge 3$ and $\|x_2\|\ge a-b$. Then $f(x_1)\equiv^{(a-b)}0$. By Lemma \ref{lem:quadratic}, there exists $z$ with
$f(z)\equiv0$ and $z\equiv^{(a-b-1)}x_1$. Note that $\|z-y_2\|=\|x_1-y_2\|=1$.

Let $u_1=y_1$, $v_1=0$, $v_2=1$, $w=y_2-u_2$, and $u_2=(1+2^{c-1}(y_1-1))z-x_1.$
Then $f(z)\equiv0$ is equivalent to
$$(z-y_2)u_2\equiv x_2y_1.$$
It is easy to verify that (\ref{eq:I2-x-1})--(\ref{eq:I2-y-2}) all hold.
Now it holds that
$$\|u_2\|=\|x_2\|+\|y_1\|-\|z-y_2\|=\|x_2\|-1\ge a-c-1,$$
by Lemma \ref{lem:restriction}, $\sigma^{u_1,v_1}_{u_2,v_2}=(\tau_1)_+$ for some $\tau_1\in{\rm Aut}^+(\Delta)$.

Consider the automorphism of $\Delta_+/\langle\alpha^{2^c}\rangle$ induced by $\tau_1\varphi_+\tau_1^{-1}$. Similarly as the final part of the proof of Lemma \ref{lem:constraint}, we have $z^2\equiv^{(c-1)}0$ and $z+w\equiv'w^2-1\equiv'0$.
Thus $(\tau_1\varphi\tau_1^{-1})_+=\sigma^{z,1}_{0,-z}$. Note that $w\equiv y_2\equiv 1\pmod{4}$, implying $\|z+1\|\ge c-2$.
\end{proof}

\begin{lem} \label{lem:conjugate}
Suppose $x\equiv^{(c-2)}z\equiv^{(c-2)}-1$.
If $\tau\in{\rm Aut}^+(\Delta)$ with $\tau_+=\sigma^{p_1,q_1}_{p_2,q_2}$, then
$\tau_+\circ\sigma^{z,1}_{0,-z}\circ\tau_+^{-1}=\sigma^{x,1}_{0,-x}$ is equivalent to
\begin{align}
\|p_2\|\ge a-2, \qquad    p_1-q_2\equiv'2zq_1,  \qquad x\equiv z+p_2. \label{eq:conjugate}
\end{align}
In particular, $\tau_+\circ\sigma^{z,1}_{0,-z}\circ\tau_+^{-1}=\sigma^{z,1}_{0,-z}$ if and only if $p_2\equiv 0$ and $p_1-q_2\equiv'2zq_1.$
\end{lem}

\begin{proof}
By Lemma \ref{lem:restriction},  $2\mid q_1$ and $\|q_2-1\|\ge a-c$.

By (\ref{eq:composite}), $\tau_+\circ\sigma^{z,1}_{0,-z}=\sigma^{x,1}_{0,-x}\circ\tau_+$ is equivalent to
\begin{align}
p_1z\big(1+2^{c-1}q_1(z-1)\big)+p_2&\equiv xp_1\big(1+2^{c-1}(p_1-1)\big), \label{eq:conj-1} \\
q_1z+q_2&\equiv'p_1-xq_1, \label{eq:conj-2} \\
-p_2z&\equiv xp_2, \label{eq:conj-3} \\
-q_2z&\equiv'p_2-xq_2. \label{eq:conj-4}
\end{align}

Since $x\equiv^{(c-2)}z\equiv^{(c-2)}-1$, we have $\|x+z\|=1$, hence by (\ref{eq:conj-3}), $\|p_2\|\ge a-2$. Then (\ref{eq:conj-4}) implies $x\equiv^{(a-2)}z$, and consequently by (\ref{eq:conj-2}), $p_1-q_2\equiv'2zq_1$. Now it holds that $c-1\ge b\ge a-c$, and one has
$$p_1-1=(p_1-q_2)+(q_2-1)\equiv^{(a-c)}2zq_1\equiv^{(a-c)}(z-1)q_1,$$
which together with (\ref{eq:conj-1}) implies
\begin{align*}
zp_1\big(1+2^{c-1}(z-1)q_1\big)+p_2\equiv xp_1\big(1+2^{c-1}(z-1)q_1\big).
\end{align*}
Since $p_2\equiv p_2\cdot p_1\big(1+2^{c-1}(z-1)q_1\big)$, we have $x\equiv z+p_2$.

Conversely, assuming (\ref{eq:conjugate}), it is rather easy to deduce (\ref{eq:conj-1})--(\ref{eq:conj-4}).
\end{proof}

\begin{lem} \label{lem:normal-u}
{\rm(i)} There exists $\tau_2\in{\rm Aut}^+(\Delta)$ such that $(\tau_2)_+\circ\sigma^{z,1}_{0,-z}=\sigma^{z,1}_{0,-z}\circ(\tau_2)_+$ and $(\tau_2\tau_1)(\omega_d)=\alpha^{\tilde{u}}\beta$ for some odd $\tilde{u}$.

{\rm(ii)} For any $\tilde{u}'$ with $\tilde{u}'\equiv^{(a-c)}\tilde{u}$, there exists $\tau\in{\rm Aut}^+(\Delta)$ such that $\tau_+\circ\sigma^{z,1}_{0,-z}=\sigma^{z,1}_{0,-z}\circ\tau_+$ and $\tau(\alpha^{\tilde{u}}\beta)=\alpha^{\tilde{u}'}\beta$.
\end{lem}

\begin{proof}
(i) Suppose $\tau_1(\omega_d)=\alpha^{u_0}\beta^{v_0}$. Note that $u_0$ is odd: otherwise it is impossible for $\tau_1(\eta_1),\ldots,\tau_1(\eta_d),\tau_1(\omega_d)$ to generate $\Delta$.

Take $y$ with $yu_0\equiv'1-v_0$. Let $p=1+4zy$, and let
$$\tilde{u}=(1-2^{c-1}y)pu_0(1+2^{c-1}y(u_0-1)).$$
Let $\tau_2=\sigma^{(1-2^{c-1}y)p,y}_{0,1}$, so that $(\tau_2)_+=\sigma^{p,2y}_{0,1}$.
Then $(\tau_2\tau_1)(\omega_d)=\alpha^{\tilde{u}}\beta$, and by Lemma \ref{lem:conjugate}, $(\tau_2)_+\circ\sigma^{z,1}_{0,-z}=\sigma^{z,1}_{0,-z}\circ(\tau_2)_+$.

(ii) 
Take $y$ with $y\tilde{u}\equiv-2^{a-c}\overline{q}$, with $\overline{q}$ to be determined. Let $p'=1+2^{a-c}\overline{q}+4zy$.
Consider
\begin{align*}
u(\overline{q})&=(1-2^{c-1}y)p'\tilde{u}(1+2^{c-1}y(\tilde{u}-1))   \\
&=p'\tilde{u}\big((1-2^{c-1}y)2^{c-1}y\tilde{u}+1-2^cy+2^{2c-2}y^2\big)  \\
&\equiv^{(a)}p'\tilde{u}(-2^{a-1}\overline{q}+1)   \\
&\equiv^{(a)}\big(1+4zy+(1-2^{c-1}(1+4zy))2^{a-c}\overline{q}\big)\tilde{u}.
\end{align*}
Obviously, we can find $\overline{q}$ such that $u(\overline{q})\equiv^{(a)}\tilde{u}'$.

Let $\tau=\sigma^{p',0}_{0,1+2^{a-c}\overline{q}}$. Now $\tau_+=\sigma^{p',0}_{0,1+2^{a-c}\overline{q}}$ commutes with $\sigma^{z,1}_{0,-z}$ and $\tau(\alpha^{\tilde{u}}\beta)=\alpha^{\tilde{u}'}\beta$.
\end{proof}

Concluding from the above lemmas, up to isomorphism we may just assume $\varphi_+=\sigma^{z,1}_{0,-z}$ for a unique $z$ with $0\le z<2^{a-2}$ and $z\equiv^{(c-2)}-1$, and $\omega_d=\alpha^{\tilde{u}}\beta$ such that $\tilde{u}$ is an odd number whose residue modulo $2^{a-c}$ is unique.

\subsection{Expressing necessary and sufficient conditions in terms of congruence equations}

Remember that for each $k$,
$$(1+2^c)^k\equiv 1+2^ck, \qquad  [k]_{1+2^c}\equiv k(1+2^{c-1}(k-1)).$$
Implied by $z\equiv^{(c-2)}-1$,
\begin{align}
\|4(z+1)^2\|\ge 2c-2\ge a-1.   \label{eq:4-times}
\end{align}

Suppose $\eta_i=\alpha^{2u_i}\beta^{v_i}$.
Then $\omega_i\omega_d^{-1}=\eta_i\cdots\eta_1=\alpha^{2f_i}\beta^{g_i}$, where
\begin{align}
f_i&=u_i+(1+2^cv_i)u_{i-1}+\cdots+(1+2^c(v_i+\cdots+v_2))u_1, \label{eq:f} \\
g_i&=v_i+\cdots+v_1.   \label{eq:g}
\end{align}
So
$\omega_i=\alpha^{2f_i+(1+2^cg_i)\tilde{u}}\beta^{g_i+1}.$

The condition (\ref{eq:inverse}) is equivalent to
\begin{align}
f_{\ell+ti}+(1-2^c(g_i+1))f_i+(1+2^{c-1}(g_{\ell+ti}-1))\tilde{u}&\equiv 0, \label{eq:inverse-1} \\
g_{\ell+ti}+g_i+2&\equiv' 0.  \label{eq:inverse-2}
\end{align}
Also the condition $1=\omega_d\omega_d^{-1}=\alpha^{2f_d}\beta^{g_d}$ implies that
\begin{align}
f_d\equiv 0, \qquad g_d\equiv'0.   \label{eq:trivial}
\end{align}

From (\ref{eq:composite}) and $z^2\equiv^{(c-1)}1$ we see $\varphi_+^2=\sigma^{s,0}_{0,1}$, with
$s=z^2+2^{c-1}(z-1)$.
Hence
\begin{align}
u_{i+2}\equiv su_i,  \qquad  v_{i+2}\equiv'v_i.    \label{eq:2-step}
\end{align}
Clearly, $u_2\equiv u_1\pmod{2}$. It follows from (\ref{eq:Delta+}) that the $u_i$'s are all odd.

Put
$$\overline{u}=u_2+(1+2^c(u_1+v_1))u_1, \qquad  \overline{v}=v_2+v_1.$$
Since
$$u_2\equiv z[u_1]_{1+2^c}\equiv (z-2^{c-1}(u_1-1))u_1,  \qquad v_2\equiv'u_1-zv_1,$$
we have
\begin{align}
\overline{u}&\equiv \big(z+1+2^{c-1}(u_1+2v_1+1)\big)u_1, \label{eq:u-bar}  \\
(s-1)\overline{u}&\equiv (z+1)^2(z-1)u_1\stackrel{(\ref{eq:4-times})}\equiv 2(z+1)^2,  \label{eq:s-1-times-u-bar} \\
\overline{v}&\equiv'u_1+(1-z)v_1. \label{eq:v-bar}
\end{align}

Obviously, $s\equiv^{(c-1)}1$, so that for each $n$,
\begin{align}
s^n\equiv 1+n(s-1).    \label{eq:s-to-g}
\end{align}

Now (\ref{eq:f}), (\ref{eq:g}), (\ref{eq:2-step}) imply
\begin{align*}
f_{2k}&\equiv \overline{u}\cdot{\sum}_{j=0}^{k-1}(1+2^cj(u_1+2v_1))s^{k-1-j}\equiv \overline{u}[k]_s\equiv k\overline{u}-k(k-1)(z+1)^2, \\
f_{2k+1}&\equiv s^ku_1+(1+2^cv_1)f_{2k}\equiv (1+k(s-1))u_1+k\overline{u}-k(k-1)(z+1)^2, \\
g_{2k}&\equiv'k\overline{v} \qquad \text{and} \qquad g_{2k+1}\equiv'k\overline{v}+v_1.
\end{align*}

\begin{lem}
Let $h=(\ell-1)/2$. The conditions {\rm(\ref{eq:inverse-1})}--{\rm(\ref{eq:trivial})} hold if and only if
\begin{align}
h\overline{v}+v_1+2\equiv'0,  \label{eq:condition1} \\
u_1+h\overline{u}\equiv h(h+1)(z+1)^2+(3\cdot 2^{c-1}-1+h(s-1))\tilde{u}, \label{eq:condition2} \\
2(z+1)^2\equiv 2^{c-1}\overline{v}+(1-s), \label{eq:condition3} \\
\|t+1\|\ge\max\{a-c+2,b+1\},  \label{eq:condition4}  \\
\|d\|\ge\max\{a-c+2,b+1\}.  \label{eq:condition5}
\end{align}
\end{lem}

\begin{proof}
Let $e=(t+1)/2$.
Let (\ref{eq:inverse-2})$_{i=2k}$ stand for (\ref{eq:inverse-2}) when $i=2k$, and so forth.

The condition (\ref{eq:inverse-2})$_{i=2k}$ reads
\begin{align*}
(h+kt)\overline{v}+v_1+k\overline{v}+2\equiv'0,
\end{align*}
which, due to $\|t+1\|\ge b+1$, is equivalent to (\ref{eq:condition1}).
Conversely, if (\ref{eq:condition1}) is satisfied, then (\ref{eq:inverse-2})$_{i=2k+1}$ holds, too:
$$g_{\ell+t(2k+1)}+g_{2k+1}+2\equiv'(h+kt+e)\overline{v}+k\overline{v}+v_1+2\equiv'0.$$

In virtue of $2^c\overline{u}\equiv 0$ and (\ref{eq:4-times}), the condition (\ref{eq:inverse-1})$_{i=2k}$ reads
\begin{align}
s^{h+kt}u_1+h\overline{u}-(h^2-h+(2h+2)k)(z+1)^2 \nonumber \\
+\big(1+2^{c-1}((h+kt)\overline{v}+v_1-1)\big)\tilde{u}\equiv 0.  \label{eq:deduce-f1}
\end{align}
Then the difference between (\ref{eq:inverse-1})$_{i=2k+2}$ and (\ref{eq:inverse-1})$_{i=2k}$ is equal to
\begin{align}
(1-s)u_1+(2h+2)(z+1)^2-2^{c-1}\overline{v}\tilde{u}\equiv 0,  \label{eq:deduce-f2}
\end{align}
where (\ref{eq:4-times}), (\ref{eq:s-to-g}) have been used.

Setting $k=0$ in (\ref{eq:deduce-f1}), we obtain
\begin{align}
(1+h(s-1))u_1+h\overline{u}-h(h-1)(z+1)^2
+\big(1+2^{c-1}(h\overline{v}+v_1-1)\big)\tilde{u}\equiv 0. \label{eq:deduce-f3}
\end{align}

Clearly, (\ref{eq:inverse-1})$_{i=2k}$ holds for all $k$ if and only if (\ref{eq:deduce-f2}) and (\ref{eq:deduce-f3}) hold.
With (\ref{eq:s-1-times-u-bar}) referred to, (\ref{eq:deduce-f2}), (\ref{eq:deduce-f3}) are equivalent to
\begin{align}
u_1+h\overline{u}&\equiv h(h-1)(z+1)^2-(1+2^{c-1}(v_1-1))\tilde{u},  \label{eq:u1}   \\
2(z+1)^2&\equiv (2^{c-1}\overline{v}+(1-s))\tilde{u}.  \nonumber
\end{align}
Note that the second equation is equivalent to (\ref{eq:condition3}) and forces $\|s-1\|=c-1$. Hence $\|z+1\|=c-2$,
and by (\ref{eq:u-bar}), $\|\overline{u}\|=c-2$.

The condition (\ref{eq:inverse-1})$_{i=2k+1}$ reads
\begin{align}
(h+e)\overline{u}-(h^2-h-2(h+1)k)(z+1)^2+s^ku_1-2^c(k\overline{v}+v_1+1)u_1 \nonumber \\
+\big(1+2^{c-1}((h+kt)\overline{v}-1)\big)\tilde{u}\equiv 0.  \label{eq:deduce-f4}
\end{align}
So the difference between (\ref{eq:inverse-1})$_{i=2k+3}$ and (\ref{eq:inverse-1})$_{i=2k+1}$ equals
\begin{align*}
(s-1-2^c\overline{v})u_1+2(h+1)(z+1)^2-2^{c-1}\overline{v}\tilde{u}\equiv 0,
\end{align*}
which can be implied by (\ref{eq:u1}), assuming (\ref{eq:deduce-f2}).

Setting $k=0$ in (\ref{eq:deduce-f4}), we obtain
\begin{align*}
(h+e)\overline{u}-h(h-1)(z+1)^2+(1-2^c(v_1+1))u_1+(1+2^{c-1}(h\overline{v}-1))\tilde{u}\equiv 0;
\end{align*}
it combined with (\ref{eq:u1}) implies $e\overline{u}\equiv 0$, which is equivalent to (\ref{eq:condition4}).

By (\ref{eq:condition1}), (\ref{eq:condition3}), $2^{c-1}v_1\equiv h(1-s)-2h(z+1)^2-2^c$, and hence (\ref{eq:u1}) becomes (\ref{eq:condition2}).

Finally, (\ref{eq:trivial}) holds if and only if
$$\frac{d}{2}\overline{u}\equiv \frac{d}{2}\big(\frac{d}{2}-1\big)(z+1)^2, \qquad  \frac{d}{2}\overline{v}\equiv '0,$$
which are equivalent to (\ref{eq:condition5}), as is easy to verify.
\end{proof}

\medskip

Now since $\|z+1\|=c-2$ and $0\le z<2^{a-2}$, we may write
$$z=2^{c-2}(2x-1)-1, \qquad  1\le x\le 2^{a-c-1}.$$

By (\ref{eq:condition3}), using $(2x-1)^2\equiv1\pmod{4}$ and $c-1\ge b\ge a-c$, we obtain
\begin{align}
\overline{v}&\equiv^{(a-c)}\frac{z^2-1}{2^{c-1}}+z-1\equiv^{(a-c)}-2^{c-3}-2x-1, \nonumber  \\
v_1&\equiv^{(a-c)}-h\overline{v}-2\equiv^{(a-c)}-2^{c-3}h+h(2x+1)-2, \label{eq:v1-2} \\
u_1&\stackrel{(\ref{eq:v-bar})}{\equiv'}\overline{v}+(z-1)v_1\equiv^{(a-c)}4-2^{c-3}-(2h+1)(2x+1). \label{eq:u1-2}
\end{align}
By (\ref{eq:u-bar}), $\|\overline{u}\|=c-2$. By (\ref{eq:condition2}), $\tilde{u}\equiv^{(c-1)}h\overline{u}-u_1\equiv^{(c-1)}2^{c-2}h-u_1$, so that
\begin{align*}
\tilde{u}\equiv^{(a-c)}(2h+1)(2^{c-3}+2x+1)-4.
\end{align*}
According to the conclusion in the end of Section 3.2 we may just set
\begin{align*}
\tilde{u}=(2h+1)(2^{c-3}+2x+1)-4.
\end{align*}

By (\ref{eq:v1-2}), (\ref{eq:u1-2}),
\begin{align*}
u_1+2v_1+1&\equiv^{(a-c)}-2^{c-3}(2h+1)-2x.
\end{align*}
Hence by (\ref{eq:u-bar}),
\begin{align}
\overline{u}\equiv(z+1)u_1-2^{c-1}\big(2^{c-3}(2h+1)+2x\big)\tilde{u}.  \label{eq:u-bar-last}
\end{align}
Using
$$s-1=z^2-1+2^{c-1}(z-1)\equiv -2^{2c-4}-2^{c-1}(2x+1),$$
we convert (\ref{eq:condition2}) into
\begin{align*}
(1+2^{c-2}h(2x-1))u_1&\equiv 2^{c-1}h(2^{c-3}(2h+1)+2x)\tilde{u}+h(h+1)2^{2c-4} \\
&\ \ \ \ +\big(3\cdot 2^{c-1}-1-h(2^{2c-4}+2^{c-1}(2x+1))\big)\tilde{u}  \\
&\equiv h(h+1)2^{2c-4}+(2^{2c-3}h^2+(3-h)2^{c-1}-1)\tilde{u},
\end{align*}
implying
\begin{align}
u_1&\equiv (1-2^{c-2}h(2x-1)-2^{2c-4}h^2)\big(h(h+1)2^{2c-4}+(2^{2c-3}h^2+(3-h)2^{c-1}-1)\tilde{u}\big) \nonumber  \\
&\equiv h(h+1)2^{2c-4}+((3-h+hx)2^{c-1}-1+2^{c-2}h-2^{2c-4}h^2)\tilde{u}. \label{eq:u1-final}
\end{align}
So (\ref{eq:u-bar-last}) becomes
$$\overline{u}\equiv(2^{2c-4}(h+1)-2^{c-2}(6x-1))\tilde{u}.$$

Finally, (\ref{eq:u1-final}) implies
$$u_1\equiv'(2^{c-2}h-1)\tilde{u}\equiv '4-2^{c-3}-(2h+1)(2x+1).$$
Hence by (\ref{eq:v-bar}) and (\ref{eq:condition1}),
\begin{align*}
v_1\equiv'-\frac{hu_1+2}{1+h(1-z)}\equiv'\frac{h(1-2^{c-2}h)\tilde{u}-2}{2h+1-2^{c-2}h(2x-1)}
\equiv'h(2^{c-3}+2x+1)-2,
\end{align*}
where the meanings of fractions are self-evident. So
$$\overline{v}\equiv' u_1+(2+2^{c-2})v_1\equiv '-2^{c-3}-2x-1.$$

\subsection{The result}

Recall
$$h=\frac{\ell-1}{2}=\frac{1}{2}\Big(\frac{(t-1,d)}{2}-1\Big).$$

For each $x$ with $1\le x\le 2^{a-c-1}$, let
\begin{align*}
\tilde{u}&=(2h+1)(2^{c-3}+2x+1)-4,   \\
\overline{u}&=(2^{2c-4}(h+1)-2^{c-2}(6x-1))\tilde{u},  \\
u&=h(h+1)2^{2c-4}+((3-h+hx)2^{c-1}-1+2^{c-2}h-2^{2c-4}h^2)\tilde{u},  \\
f_{2k}&=k\overline{u}-k(k-1)2^{2c-4},  \\
f_{2k+1}&=(1+2^ck)u+k\overline{u}-k(k-1)2^{2c-4},   \\
g_{2k}&=-k(2^{c-3}+2x+1),  \\
g_{2k+1}&=(h-k)(2^{c-3}+2x+1)-2,
\end{align*}
and put $\mathcal{M}(x)=\mathcal{CM}(\Delta,\{\omega_1,\ldots,\omega_d\})$ with $\omega_i=\alpha^{2f_i+(1+2^cg_i)\tilde{u}}\beta^{g_i+1}.$

\begin{thm}\label{thm:main}
If $\Delta$ admits $d$-valent RBCM$_t$'s, then necessarily $\|d\|,\|t+1\|\ge\max\{a-c+2,b+1\}$ and $c>b$.
When these hold, each $d$-valent RBCM$_t$ on $\Delta$ has type I and is isomorphic to $\mathcal{M}(x)$ for a unique $x$ with $1\le x\le 2^{a-c-1}$.
\end{thm}

\ \\
Haimiao Chen \ \ \ \ chenhm$@$math.pku.edu.cn \ \ \ \ orcid: 0000-0001-8194-1264  \\
Jingrui Zhang \ \ \ \ nanfangzjr$@$163.com   \\
Department of Mathematics, \\
Beijing Technology and Business University, \\
100048, 11\# Fucheng Road, Haidian District, Beijing, China.

\end{document}